\newcommand{\into}{\hookrightarrow}
\newcommand{\xto}{\xrightarrow}
\newcommand{\F}{\mathbb{F}}
\newcommand{\Ah}{\mathcal{A}}
\newcommand{\Bh}{\mathcal{B}}
\newcommand{\Ch}{\mathcal{C}}
\newcommand{\Sb}{\mathcal{S}}
\newcommand{\Ext}{\mathrm{Ext}}
\newcommand{\Hom}{\mathrm{Hom}}
\newcommand{\gExt}{\underline{\Ext}}
\newcommand{\gHom}{\underline{\Hom}}
\newcommand{\op}{\mathrm{op}}
\newcommand{\HH}{\mathrm{HH}}
\newcommand{\Imm}{\mathrm{Im}\,}
\newcommand{\Ker}{\mathrm{Ker}\,}
\newcommand{\bbb}{\bullet}
\newcommand{\Cb}{\Ch^\bbb}
\newcommand{\Hb}{H^\bbb}
\newcommand{\dee}{\partial}
\newcommand{\ot}{\otimes}
\newcommand{\Span}{\mathrm{Span}}
\theoremstyle{plain}
\newtheorem{theorem}{Theorem}[section]
\newtheorem{proposition}[theorem]{Proposition}
\newtheorem{lemma}[theorem]{Lemma}
\newtheorem{cor}[theorem]{Corollary}
\theoremstyle{definition}
\newtheorem{definition}[theorem]{Definition}
\newtheorem{notn}[theorem]{Notation}
\theoremstyle{remark}
\newtheorem{remark}[theorem]{Remark}
\author{Eivind Xu Djurhuus}
\address{Department of Mathematics, UiO, Oslo, Norway.}
\email{eivindxd@uio.no}
\author{Gereon Quick}
\address{Department of Mathematical Sciences, NTNU, Trondheim, Norway}
\email{gereon.quick@ntnu.no}
\title[Non-realizability of a triple Massey product]{Non-realizability of a triple Massey product for the algebra \(\F_2[a,b,c]/(ab,bc)\)}
\begin{document}

\begin{abstract} 
We show that an often used example of a cohomology algebra with non-vanishing triple Massey product is intrinsically $A_3$-formal and therefore, in fact, cannot be realized as the cohomology of a differential graded algebra with non-vanishing triple Massey product. 
We prove this result by computing the graded Hochschild cohomology group which contains the potential obstruction to the vanishing. 
\end{abstract}
\subjclass{55S30, 16E40, 16E45, 16S37.}

\maketitle

\section{Introduction}

Let $\Cb$ be a differential graded $\F_2$-algebra (DGA) with differential $\delta$ and cohomology algebra $\Hb$. 
Let $a,b,c$ be cohomology classes such that $a \cup b = 0$ and $b \cup c = 0$. 
We recall that the Massey product $\langle a,b,c \rangle$ is defined as follows.  
Let $A$, $B$, $C$ be cocycles representing $a$, $b$, $c$, respectively, 
and let $E_{ab}$ and $E_{bc}$ be cochains such that $\delta E_{ab} = A \cup B$ and 
$\delta E_{bc} = B \cup C$.  
The set $M \coloneqq \{A,B,C,E_{ab},E_{bc}\}$ is called a defining system for the triple Massey product of $a$, $b$, and $c$. 
The cochain $A \cup E_{bc} + E_{ab} \cup C$ is a cocycle. 
We write $\langle a,b,c \rangle_M \in \Hb$ for the corresponding  cohomology class. 
The {\em triple Massey product} $\langle a,b,c \rangle$ is the set of all cohomology classes $\langle a,b,c \rangle_M$ for all such defining systems $M$. 
The class $\langle a,b,c \rangle_M$ depends on the choice of the defining system $M$. 
The image in the quotient $\Hb/(a \cup \Hb + \Hb \cup c)$, however, 
is uniquely determined by $a$, $b$, and $c$. 
We say that $\langle a,b,c \rangle$ {\em vanishes} if its corresponding class in $\Hb/(a \cup \Hb + \Hb \cup c)$ is zero. 
Massey products play an important role in the classification of DGAs with a given cohomology ring.

To construct the simplest commutative graded algebra which may be realized as the cohomology of a DGA 
with a non-vanishing Massey product, 
one may consider $\F_2[a, b, c]/(ab, bc)$ with $a,b,c$ elements in degree one. 
We have $ab=0$ and $bc=0$ by construction, 
and hence $\langle a,b,c \rangle$ is defined. 
We may then expect that it may be possible 
to find a DGA such that $\langle a,b,c \rangle$ does not vanish.  
In this note, however, we show that the Massey product $\langle a,b,c \rangle$ always vanishes. 
More precisely, our main result is the following theorem. 
Recall that a differential graded algebra is called {\em $A_3$-formal} if the minimal $A_{\infty}$-model of $\Cb$  has a trivial homotopy associator $m_3$ (see e.g.~\cite{Keller}, \cite{PQ2}, and Section \ref{sec:HH}). 

\begin{theorem}\label{thm:main_intro}
Let $\Cb$ be a differential graded algebra over $\F_2$ with cohomology algebra 
isomorphic to $\F_2[a, b, c]/(ab, bc)$. 
Then $\Cb$ is $A_3$-formal. 
Thus, all triple Massey products for $\Cb$ vanish. 
In particular, the triple Massey product 
$\langle a, b, c \rangle$ vanishes for $\Cb$.  
\end{theorem}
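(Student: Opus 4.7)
By Kadeishvili's theorem, the cohomology $\Hb = \F_2[a,b,c]/(ab,bc)$ of $\Cb$ carries a minimal $A_\infty$-algebra structure $(\Hb, m_2, m_3, m_4, \dots)$, with $m_2$ the cup product, that is $A_\infty$-quasi-isomorphic to $\Cb$. The triple Massey product $\langle a, b, c\rangle$ is represented by $m_3(a,b,c)$ modulo the indeterminacy $a\Hb + \Hb c$, and more generally, if $m_3$ is $A_\infty$-gauge-equivalent to $0$ via an $A_\infty$-self-isomorphism that fixes $m_2$, then $\Cb$ is $A_3$-formal and all of its triple Massey products vanish. So the plan is to produce such a gauge equivalence.

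The standard Hochschild-theoretic reformulation is that $m_3$ is a Hochschild $3$-cochain on $\Hb$ with values in the bimodule $\Hb$, of internal degree $-1$ (since $m_n$ has total degree $2-n$). The first $A_\infty$-relation beyond associativity of $m_2$ says exactly that $m_3$ is a Hochschild $3$-cocycle, while modifying the minimal model by an $A_\infty$-self-isomorphism whose first non-trivial component $f_2$ has internal degree $-1$ shifts $m_3$ by the Hochschild coboundary of $f_2$. Hence the obstruction to eliminating $m_3$ lies in the graded Hochschild cohomology group $HH^{3,-1}(\Hb,\Hb)$, and the theorem reduces to showing that this group vanishes.

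To carry out the computation, I would exploit that $\Hb$ is a commutative monomial algebra, namely the Stanley--Reisner ring of the simplicial complex on $\{a,b,c\}$ with facets $\{a,c\}$ and $\{b\}$, and that it carries a multigrading by the exponent triple $(i,j,k)$ in $a^i b^j c^k$. One constructs a small projective resolution of $\Hb$ as an $\Hb$-bimodule adapted to the two quadratic relations $ab$ and $bc$ (for instance a Koszul-type or Bardzell-type resolution), applies $\Hom_{\Hb^e}(-,\Hb)$, and restricts to internal degree $-1$. The multigrading makes each piece of the resulting complex finite-dimensional, reducing the problem to concrete linear algebra over $\F_2$.

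The main obstacle I expect is to execute this restriction carefully: because $\Hb$ has infinite $\F_2$-dimension, one must account for every multidegree in which a degree-$(-1)$ cochain can be non-zero, and verify in each that every $3$-cocycle is a $3$-coboundary. The $a\leftrightarrow c$ symmetry of $\Hb$ cuts the case analysis roughly in half, and the absence of negative-degree elements in $\Hb$ forces most multidegrees to contribute trivially. Once $HH^{3,-1}(\Hb,\Hb)=0$ is established, $A_3$-formality, and hence the vanishing of $\langle a,b,c\rangle$ for $\Cb$, follows at once from the gauge-theoretic interpretation above.
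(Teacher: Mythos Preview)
Your proposal is correct and follows essentially the same route as the paper: reduce $A_3$-formality to the vanishing of the class of $m_3$ in $\HH^{3,-1}(\Hb,\Hb)$, replace the bar resolution by a small Koszul-type bimodule resolution of the quadratic algebra $\F_2[a,b,c]/(ab,bc)$, and then verify by explicit $\F_2$-linear algebra that every internal-degree $-1$ Hochschild $3$-cocycle is a coboundary. The only cosmetic differences are that the paper frames the algebra as Koszul rather than Stanley--Reisner and works with the total grading rather than the $(i,j,k)$-multigrading you suggest; your multigraded case analysis and the $a\leftrightarrow c$ symmetry would in practice reproduce the same finite list of relations the paper checks.
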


Our interest in the realizability of triple Massey products grew out of the work of Hopkins--Wickelgren in \cite{HW} on triple Massey products in Galois cohomology. 
The latter has inspired a lot of research in recent years, see for example \cite{MS2} and \cite{MT2}. 

\begin{remark}\label{rem:smaller_algebras}
One may consider other $\F_2$-algebras and ask whether they realize a non-trivial Massey product.  
Since the definition of the Massey product does not require distinct elements, 
we may first consider the algebra $\F_2[a]/(a^2)$ with just one generator. 
However, $\F_2[a]/(a^2)$ is zero in degree two, 
and hence $\langle a,a,a \rangle$ must vanish.   
The algebra $\F_2[a,b]/(ab)$ is a Boolean graded algebra in the sense of \cite[Definition 6.1]{PQ1}. 
More generally, any algebra of the form $\F_2[a_1,\ldots,a_n]/I$ 
where $I$ is the ideal generated by all products $a_ia_j$ for $i \ne j$ 
is a Boolean graded algebra. 
The algebra $\F_2[a,b]/(a^2,ab)$ is a connected sum of a dual and a Boolean graded algebra in the sense of \cite[Section 6]{PQ1}. 
All these algebras are intrinsically $A_{\infty}$-formal by \cite[Theorem 7.13]{PQ1} 
and do not allow for non-vanishing Massey products. 
\end{remark}

We now outline the proof of Theorem \ref{thm:main_intro} and thereby describe the content of the paper. 
For the whole manuscript, 
we assume that all algebras and vector spaces are over $\F_2$. 
The differentials in complexes of algebras and modules, except the bar complex, raise the degree by one. 
In Section \ref{sec:HH} we recall the Hochschild cohomology of graded algebras and construct the Hochschild cohomology class $[m_3] \in \HH^{3,-1}(\Hb(\Cb))$ associated to a differential graded algebra $\Cb$. 
We note that $[m_3]$ equals the canonical class of $\Cb$ introduced by Benson--Krause--Schwede in \cite{BKS} as an obstruction for the realizability of modules over Tate cohomology.  
We then show that $\Cb$ is $A_3$-formal if and only if $[m_3]$ is zero. 
For the latter, we assume familiarity with some basic theory of $A_{\infty}$-algebras.  
In Section \ref{sec:Koszul_algebras} we recall the definition of Koszul algebras and show that $\F_2[a, b, c]/(ab, bc)$ is Koszul. 
Knowing that an algebra is Koszul simplifies the task to compute its Hochschild cohomology significantly. 
In Section \ref{sec:main_result_proof} we prove Theorem \ref{thm:computationof_HH} which states that $\HH^{3,-1}(\F_2[a, b, c]/(ab, bc)) = 0$ by computing the image and the kernel of the differential in the Hochschild complex. 
Theorem \ref{thm:computationof_HH} then implies Theorem \ref{thm:main_intro}. 


\subsection*{Acknowledgement} 
We thank Mads Hustad Sandøy for helpful conversations. 
We are grateful to the anonymous referee for comments and suggestions that helped to improve the manuscript. 


\section{Hochschild cohomology and Massey products}\label{sec:HH}

Let $A$ be a graded unital $\F_2$-algebra. 
We recall that the bar resolution
$B(A)$ of $A$ is the non-negative chain complex
of free graded $A$-bimodules
given by $B_n(A) \coloneqq A^{\otimes n+2}$ for $n \geq 0$.
The differential 
$d_n \colon B_n(A) \to B_{n-1}(A)$
is given by
\begin{align}\label{eq:diff_bar_complex}
a_0 \otimes \cdots \otimes a_{n+1} 
\mapsto \sum_{i = 0}^n 
a_0 \otimes \cdots \otimes 
a_i a_{i+1} \otimes \cdots \otimes a_{n+1}.
\end{align}
We write $A^e = A \ot A^{\op}$. 
Note that $A^{\otimes n+2} \cong A^e \otimes A^{\otimes n}$
as a graded $A$-bimodule, hence $B(A)$
indeed consists of free modules. 

\begin{proposition}\label{bar-acyclic}
The bar resolution $B(A)$ is a free resolution
of $A$ as a graded $A$-bimodule.
\end{proposition}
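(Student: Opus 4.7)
The plan is to verify three things: each $B_n(A)$ is free as a graded $A$-bimodule, the complex is augmented over $A$, and the augmented complex is acyclic. Freeness was already noted in the setup via $B_n(A) \cong A^e \otimes A^{\otimes n}$. For the augmentation I would take $\varepsilon \colon B_0(A) = A \otimes A \to A$ to be multiplication, $a_0 \otimes a_1 \mapsto a_0 a_1$; a one-line calculation gives $\varepsilon \circ d = 0$ on $B_1(A)$.

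The central step is acyclicity. I would construct an explicit contracting homotopy $s_n \colon B_n(A) \to B_{n+1}(A)$ for $n \geq -1$ by
\[
s_n(a_0 \otimes \cdots \otimes a_{n+1}) = 1 \otimes a_0 \otimes \cdots \otimes a_{n+1},
\]
with the convention $B_{-1}(A) = A$ and $s_{-1}(a) = 1 \otimes a$. Each $s_n$ is only a map of left $A$-modules (not of bimodules), but this still suffices to witness that the underlying complex of $\F_2$-vector spaces is contractible, hence acyclic as a complex of $A$-bimodules.

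The verification reduces to the identity $d \circ s + s \circ d = \id$. Applying $d$ to $1 \otimes a_0 \otimes \cdots \otimes a_{n+1}$, the $i = 0$ term of the bar differential contributes $a_0 \otimes \cdots \otimes a_{n+1}$, while the remaining $n+1$ terms, after reindexing, are exactly $s_{n-1} \circ d(a_0 \otimes \cdots \otimes a_{n+1})$. Over $\F_2$ the two copies cancel to leave the identity. The boundary case $n = 0$ is the analogous identity $d \circ s_0 + s_{-1} \circ \varepsilon = \id_{B_0(A)}$, which unwinds in one line.

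The only real obstacle is careful index bookkeeping in the telescoping sum; since the argument is the standard simplicial identity for the bar construction, I expect no conceptual difficulty beyond what the definitions already provide.
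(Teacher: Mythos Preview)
Your approach is correct and essentially identical to the paper's: both augment $B(A)$ by the multiplication map and exhibit the same contracting homotopy $a_0 \otimes \cdots \otimes a_{n+1} \mapsto 1 \otimes a_0 \otimes \cdots \otimes a_{n+1}$, then verify $ds + sd = \id$ by observing that the $i=0$ term of $d$ applied to $1 \otimes a_0 \otimes \cdots \otimes a_{n+1}$ gives back the original tensor while the remaining terms reproduce $s \circ d$. One small slip worth fixing: your $s_n$ is \emph{right} $A$-linear, not left $A$-linear (compare $s_n(a \cdot x) = 1 \otimes a a_0 \otimes \cdots$ with $a \cdot s_n(x) = a \otimes a_0 \otimes \cdots$), though as you correctly observe, mere $\F_2$-linearity already suffices for acyclicity.
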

\begin{proof}
It suffices to show that 
the extended complex $\widetilde{B}(A)$ is acyclic,  
where $\widetilde{B}(A)$  
is extended from $B(A)$ by adjoining 
$\widetilde{B}_{-1}(A) := A$ 
in degree $-1$ via the multiplication 
map $\mu \colon A \otimes A \to A$. 
We claim that the map
$h \colon \widetilde{B}(A) \to \widetilde{B}(A)$ of degree $1$ given by
\[
a_0 \otimes \cdots \otimes a_{n+1}
\mapsto 1 \otimes a_0 \otimes \cdots \otimes a_{n+1}
\]
is a contracting homotopy i.e., 
$dh + hd = 1$. 
Indeed, we compute directly that
\[
dh(a_0 \otimes \cdots \otimes a_{n+1})
= a_0 \otimes \cdots \otimes a_{n+1}
- hd(a_0 \otimes \cdots \otimes a_n). \qedhere
\]
\end{proof}

\begin{definition}
Let $M$ and $N$ be graded $A$-bimodules. 
We define $\gHom_A(M, N)$ as the graded $\F_2$-vector space
with degree $s$ component given by
$A$-linear graded maps $f \colon M \to N[s]$, where $N[s]$ is the graded $A$-module given by $N[s]^n = N^{s+n}$.
\end{definition}

\begin{definition}
Let $M$ be a graded $A$-bimodule. 
We define the Hochschild cohomology $\HH^{n,\bullet}(A,M)$
as the $n$th cohomology of the cochain complex
\[
\gHom_{A^e}(B(A),M)
\]
of graded $\F_2$-vector spaces with differential $\delta^n (f) = f \circ d_n$ where $d_n$ is the differential of $B(A)$. 
When $M=A$ we will write
$\HH(A) \coloneqq \HH(A, A)$. 
\end{definition}

We note that the groups $\HH^{\bullet, \bullet}(A,M)$ are equipped with a cohomological grading, and an internal grading induced by the grading of $A$ and $M$.
We can describe $\HH^{n,s}(A,M)$ more concretely as follows. 
Using the natural contracting isomorphism
\[
\gHom_{A^e}(A^e \otimes A^{\otimes n}, M) \cong 
    \gHom_{\F_2}(A^{\otimes n}, M)
\]
we see that $\HH^{n,\bullet}(A, M)$ is isomorphic to the $n$th cohomology of the complex
\begin{align}
\label{eq:HH_reduced_complex}
\cdots \to \gHom_{\F_2}(A^{\otimes n-1}, M) \xto{\partial}
\gHom_{\F_2}(A^{\otimes n}, M) 
\xto{\partial} \gHom_{\F_2}(A^{\otimes n+1}, M) \to \cdots,
\end{align}
where the differentials are given by
\begin{align*}
\partial(f)(a_1 \otimes \cdots \otimes a_{n+1})
    & = 
    a_1f(a_2 \otimes \cdots \otimes a_{n+1})\\
    & + \sum_{i=1}^{n} 
        f(a_1 \otimes \cdots \otimes a_i a_{i+1}
        \otimes \cdots \otimes a_{n+1})\\
    & + 
    f(a_1 \otimes \cdots \otimes a_{n})a_{n+1}.
\end{align*}


\begin{remark}\label{rem:HH_and_Ext}
By Proposition~\ref{bar-acyclic},
we see that
$\HH(A, M)$ computes the graded Ext
modules $\gExt_{A^e}(A, M)$. 
In particular,
we can compute $\HH(A, M)$
using any free resolution of $A$ as
a graded $A$-bimodule. 
\end{remark}


We now assume that the reader is familiar with $A_{\infty}$-algebras. 
For an introduction to the theory of $A_{\infty}$-algebras and references with all details we refer to \cite{Keller}. 
Let $(\Cb, \delta, \cup)$ be a differential graded algebra (DGA) over $\F_2$ with cohomology algebra $\Hb$. 
%
By the work of Kadeishvili \cite{Kadeishvili82, Kadeishvili88} (see also \cite{Kadeishvili23}, \cite{Keller}, and \cite{Merkulov}), 
one can equip $\Hb$ with the structure of an $A_{\infty}$-algebra $(\Hb,\{m_n\}_{n\ge 1})$ such that $m_1=0$ together with a quasi-isomorphism of $A_{\infty}$-algebras 
$(\Hb,\{m_n\}) \xto{\simeq} (\Cb, \delta, \cup)$. 
The $A_{\infty}$-algebra $(\Hb,\{m_n\})$ is called a {\em minimal model} of $(\Cb, \delta, \cup)$.  
Since any two minimal models of $(\Cb, \delta, \cup)$ are isomorphic as $A_{\infty}$-algebras, 
we speak of {\em the} minimal model from now on. 
A DGA is called {\em $A_{\infty}$-formal}, or {\em formal} as an $A_{\infty}$-algebra, if its minimal model can be chosen such that $m_n = 0$ for all $n \ge 3$. 
%
%
We now consider the following weaker notion. 

\begin{definition}\label{def:A3formal}
Let $\Cb$ be a DGA. 
We say that $\Cb$ is {\em $A_3$-formal} if its minimal model can be chosen such that $m_3 = 0$.  
\end{definition}

We refer to \cite{PQ2} for a more detailed discussion of $A_3$-formality. 
We recall the following special case from \cite[Theorem C]{BMM}. 
Let $a,b,c \in \Hb$ be cohomology classes such that $a \cup b = 0$ and $b \cup c = 0$. 
Then $m_3(a \ot b \ot c) \in \langle a,b,c \rangle$. 
This implies the following well-known fact. 

\begin{proposition}\label{prop:A3formal_Massey}
Let $\Cb$ be a DGA with cohomology algebra $\Hb$.   
Let $a,b,c \in \Hb$ be cohomology classes such that $a \cup b = 0$ and $b \cup c = 0$. 
Assume that $\Cb$ is $A_3$-formal. 
Then the triple Massey product $\langle a,b,c \rangle$ contains zero. \qed
\end{proposition}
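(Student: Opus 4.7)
The proof is essentially a one-line consequence of the two facts the authors have just cited, so my plan is to assemble them in the right order. First I would invoke the definition of $A_3$-formality: by Definition~\ref{def:A3formal}, since $\Cb$ is $A_3$-formal we may choose a minimal $A_{\infty}$-model $(\Hb,\{m_n\})$ of $\Cb$ with $m_3 = 0$.

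Next I would apply the result quoted just above the proposition, namely the special case of \cite[Theorem C]{BMM}: whenever $a,b,c \in \Hb$ satisfy $a \cup b = 0$ and $b \cup c = 0$, the element $m_3(a \ot b \ot c)$ lies in the Massey product set $\langle a,b,c \rangle$. Combining this with $m_3 = 0$ gives $0 = m_3(a \ot b \ot c) \in \langle a,b,c \rangle$, which is the claim.

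There is essentially no obstacle; the only subtlety worth noting in passing is that the statement concerns the set $\langle a,b,c\rangle$ containing zero (as an element of $\Hb$), not the vanishing of its image in the quotient $\Hb/(a \cup \Hb + \Hb \cup c)$, and the above argument gives exactly this. (Of course, containing zero in the set implies that the image in the quotient is zero as well, which is how the proposition will be applied in the proof of Theorem~\ref{thm:main_intro}.) Since both inputs are quoted as black boxes from \cite{BMM} and the definition, the proof occupies no more than two sentences.
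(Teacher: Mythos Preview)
Your proposal is correct and matches the paper's approach exactly: the paper gives no written proof but simply states the proposition with a \qed, having just recalled from \cite{BMM} that $m_3(a\ot b\ot c)\in\langle a,b,c\rangle$, which together with $m_3=0$ is precisely your argument.
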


Let $\Cb$ be a DGA with cohomology algebra $\Hb$. 
We note that $m_3 \colon (\Hb)^{\ot 3} \to \Hb[-1]$ is a graded map 
which we can construct as follows (see for example \cite[Section 5]{BKS}, \cite{Merkulov}, \cite{PQ2}).  
We choose an $\F_2$-linear graded map $f_1 \colon \Hb \to \Ker \delta$ which induces the identity when taking cohomology.  
Since $f_1$ is multiplicative on cohomology, we can find a graded $\F_2$-linear map  
$f_2 \colon \Hb \otimes \Hb \to \Cb$ of degree $-1$ satisfying  
\begin{align*}
\delta(f_2(a \ot b)) = f_1(a \cup b) + f_1(a) \cup f_1(b). 
\end{align*}
%
Now we define a graded $\F_2$-linear map $\Phi_3 \colon (\Hb)^{\otimes 3} \to \Cb[-1]$ by 
\begin{align}\label{eq:def_of_Phi3_minimal}
\Phi_3(a \ot b \ot c) \coloneqq 
f_1(a) f_2(b \ot c) 
+ f_2(a \ot b) f_1(c) + f_2((a b) \ot c + a \ot (b c)) 
\end{align}
for all homogeneous elements $a,b,c \in \Hb$ where we write $xy$ for the product $x \cup y$ to shorten the notation.  
We check that $\Phi_3$ has image in the cocycles of $\Cb$, 
and hence $\Phi_3$ induces a graded map $[\Phi_3] \colon (\Hb)^{\ot 3} \to \Hb[-1]$. 
We set $m_3 := [\Phi_3]$. 
By \cite[Proposition 5.4]{BKS}, $m_3$ is a cocycle in the complex \eqref{eq:HH_reduced_complex}.   
By \cite[Corollary 5.7]{BKS}, 
the corresponding Hochschild cohomology class  $[m_3] \in \HH^{3,-1}(\Hb)$ is independent of the choice of $f_1$ and $f_2$,   
and it is called the {\em canonical class} of $\Cb$ following Benson--Krause--Schwede who studied this class as an obstruction to the realizability of modules over Tate cohomology in \cite{BKS}.   
The following result is a modified version of Kadeishvili's theorem \cite{Kadeishvili88} (see also \cite[Theorem 3.9]{PQ2}, and 
\cite[Theorem 4.7]{ST}). 

\begin{theorem}\label{thm:canonical_A3formal}
Let $\Cb$ be a DGA 
with canonical class $[m_3] \in \HH^{3,-1}(\Hb)$. 
Then $\Cb$ is $A_3$-formal if and only if $[m_3]=0$. 
\end{theorem}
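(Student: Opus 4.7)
The plan is to prove both directions by tracking how the cocycle $\Phi_3$ depends on the auxiliary choices $f_1, f_2$ and by invoking Kadeishvili's inductive construction to pass between admissible defining data and complete minimal $A_\infty$-models. For the direction ($\Rightarrow$), suppose $\Cb$ is $A_3$-formal, so there exist a minimal model $(\Hb, \{m'_n\})$ with $m'_3 = 0$ and an $A_\infty$-quasi-isomorphism $\widetilde f \colon (\Hb, \{m'_n\}) \to (\Cb, \delta, \cup)$. The linear component $\widetilde f_1$ is a cocycle-valued section of the cohomology projection, and the $A_\infty$-morphism relation at level two shows that the quadratic component $\widetilde f_2$ satisfies $\delta \widetilde f_2(a \ot b) = \widetilde f_1(ab) + \widetilde f_1(a) \widetilde f_1(b)$. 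Thus $(\widetilde f_1, \widetilde f_2)$ is admissible defining data for the canonical class, and the $A_\infty$-morphism relation at level three reads
\begin{align*}
\widetilde f_1(m'_3(a \ot b \ot c)) + \Phi_3(a \ot b \ot c) = \delta \widetilde f_3(a \ot b \ot c).
\end{align*}
Setting $m'_3 = 0$ exhibits $\Phi_3$ as $\delta$-exact, so $m_3 = [\Phi_3]$ vanishes on the nose and therefore $[m_3] = 0$ in $\HH^{3,-1}(\Hb)$.

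For the direction ($\Leftarrow$), assume $[m_3] = 0$ and fix any defining data $f_1, f_2$. Since $m_3$ is a Hochschild coboundary, write $m_3 = \partial g$ for some $g \in \gHom_{\F_2}(\Hb^{\ot 2}, \Hb)$ of internal degree $-1$, and lift $g$ to $\widetilde g \coloneqq f_1 \circ g \colon \Hb^{\ot 2} \to \Cb$, which is a cocycle of degree $-1$ since $f_1$ lands in $\Ker \delta$. Replacing $f_2$ by $f'_2 \coloneqq f_2 + \widetilde g$ is again admissible since $\delta f'_2 = \delta f_2$, and substitution into \eqref{eq:def_of_Phi3_minimal} gives the chain-level change
\begin{align*}
\Phi'_3(a \ot b \ot c) + \Phi_3(a \ot b \ot c)
& = f_1(a) \widetilde g(b \ot c) + \widetilde g(a \ot b) f_1(c) \\
& \quad + \widetilde g((ab) \ot c) + \widetilde g(a \ot (bc)).
\end{align*}
Passing to cohomology this recovers precisely $(\partial g)(a \ot b \ot c)$, so the new cocycle satisfies $m'_3 = m_3 + \partial g = 0$ over $\F_2$. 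Kadeishvili's inductive procedure then extends $(f_1, f'_2)$ to a complete minimal $A_\infty$-model of $\Cb$ with $m_3 = 0$, establishing $A_3$-formality.

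The main obstacle is the identification of the cochain-level correction $\Phi'_3 + \Phi_3$ with the Hochschild differential $\partial g$ upon passing to cohomology; this is a direct computation using the explicit formula for $\partial$ in \eqref{eq:HH_reduced_complex} together with the fact that $f_1$ represents the identity on $\Hb$. The remaining steps—admissibility of $(\widetilde f_1, \widetilde f_2)$ as defining data in the forward direction, the cocycle property of $\widetilde g$, and the fact that Kadeishvili's extension procedure applies to the modified pair $(f_1, f'_2)$—are routine once the $A_\infty$-morphism relations at levels two and three are written out.
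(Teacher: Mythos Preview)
Your proof is correct and follows essentially the same approach as the paper's. In both directions you do what the paper does: modify the auxiliary datum $f_2$ by a cocycle-valued map so that the induced $m_3$ changes by the Hochschild coboundary $\partial g$, and then appeal to Kadeishvili's construction to extend to a full minimal model. Your treatment is in fact somewhat more explicit than the paper's---in the forward direction you spell out the $A_\infty$-morphism relation at level three to exhibit $\Phi_3$ as $\delta\widetilde f_3$, whereas the paper simply asserts that $m_3$ is a trivial cocycle; and in the backward direction you explicitly lift $g$ through $f_1$ and name the invocation of Kadeishvili, which the paper leaves implicit.
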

\begin{proof}
If $\Cb$ is $A_3$-formal then $m_3$ is a trivial cocycle, 
and the class of $m_3$ vanishes in $\HH^{3,-1}(\Hb)$. 
%
Now we assume that $[m_3]=0$  in $\HH^{3,-1}(\Hb)$. 
We may assume that $\Phi_3$ and hence $m_3$ is constructed using maps $f_1,f_2$ as in \eqref{eq:def_of_Phi3_minimal}. 
Then there exists an $\F_2$-linear map $\eta \colon (\Hb)^{\otimes 2} \to (\Ker \delta)[-1]$ 
such that $\dee^2 [\eta] = m_3$ as maps 
$(\Hb)^{\otimes 3} \to \Hb[-1]$. 
We set $\tilde{f}_2 = f_2 + \eta$. 
We note that $\tilde{f}_2$ satisfies 
\begin{align*}
\delta\tilde{f}_2(a \ot b) = \delta(f_2(a \ot b) + \eta(a \ot b))  
= f_1(a \cup b) + f_1(a) \cup f_1(b)
\end{align*}
since $\delta \circ \eta=0$. 
We then define the map $\widetilde{\Phi}_3$ by replacing $f_2$ with $\tilde{f}_2$, 
i.e., we define 
\begin{align*}
\widetilde{\Phi}_3(a \ot b \ot c) \coloneqq 
f_1(a) \tilde{f}_2(b \ot c) 
+ \tilde{f}_2(a \ot b) f_1(c) + \tilde{f}_2((a b) \ot c + a \ot (b c)) 
\end{align*}
for all homogeneous elements $a,b,c \in \Hb$ where we again write $xy$ for $x \cup y$ to shorten the notation.  
We then have 
\begin{align*}
(\Phi_3 - \widetilde{\Phi}_3)(a \ot b \ot c)
=  f_1(a) \eta(b \ot c) + \eta(a \ot b) f_1(c) + \eta((a b) \ot c + a \ot (b c)). 
\end{align*}
By definition of $\dee^2$ and the assumption on $\eta$, 
this implies $\widetilde{\Phi}_3 = \Phi_3 - \dee^2\eta = 0$ as maps $(\Hb)^{\otimes 3} \to \Hb[-1]$. 
\end{proof}

As a direct consequence we get: 

\begin{cor}\label{cor:HH_and_A3_formal}
Let $A$ be a graded algebra with $\HH^{3,-1}(A)=0$. 
Then every DGA $\Cb$ whose cohomology algebra is isomorphic to $A$ is $A_3$-formal. \qed
\end{cor}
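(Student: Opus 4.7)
The plan is to obtain this corollary as a nearly immediate consequence of Theorem~\ref{thm:canonical_A3formal}. Given a DGA $\Cb$ whose cohomology algebra $\Hb(\Cb)$ is isomorphic to $A$ as a graded $\F_2$-algebra, I would start by producing the canonical class $[m_3] \in \HH^{3,-1}(\Hb(\Cb))$ via the recipe recalled just before Theorem~\ref{thm:canonical_A3formal}: choose an $\F_2$-linear section $f_1 \colon \Hb(\Cb) \to \Ker \delta$ of the projection to cohomology, then a graded map $f_2$ of degree $-1$ trivializing the obstruction to $f_1$ being multiplicative on the nose, assemble $\Phi_3$ as in~\eqref{eq:def_of_Phi3_minimal}, and pass to the class $[m_3]$.

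Next, I would invoke the naturality of Hochschild cohomology under isomorphisms of graded algebras. Since $\HH^{*,*}$ is defined via the functorial bar resolution, any graded algebra isomorphism $\Hb(\Cb) \isoto A$ induces an isomorphism $\HH^{3,-1}(\Hb(\Cb)) \isoto \HH^{3,-1}(A)$. The hypothesis $\HH^{3,-1}(A) = 0$ thus forces $\HH^{3,-1}(\Hb(\Cb)) = 0$, and in particular the canonical class $[m_3]$ of $\Cb$ must vanish.

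Finally, I would apply the ``if'' direction of Theorem~\ref{thm:canonical_A3formal}: vanishing of $[m_3] \in \HH^{3,-1}(\Hb(\Cb))$ is equivalent to $\Cb$ being $A_3$-formal in the sense of Definition~\ref{def:A3formal}. This yields the conclusion. I do not expect any serious obstacle here; the argument is purely a matter of chaining the naturality of $\HH$ together with the criterion already established in Theorem~\ref{thm:canonical_A3formal}, which is why the corollary is stated with only a \qed.
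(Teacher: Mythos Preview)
Your proposal is correct and matches the paper's intended argument exactly: the corollary is stated with only a \qed\ precisely because it is the immediate combination of Theorem~\ref{thm:canonical_A3formal} with the invariance of $\HH^{3,-1}$ under graded algebra isomorphism, which you spell out. There is nothing to add.
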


By Proposition \ref{prop:A3formal_Massey} and Corollary \ref{cor:HH_and_A3_formal},   
in order to show Theorem \ref{thm:main_intro} it will suffice to show $\HH^{3,-1}(\F_2[a, b, c]/(ab, bc))=0$. 
This is what we now set out to prove.


\section{Koszul algebras}\label{sec:Koszul_algebras}

Let $V$ denote a finite-dimensional $\F_2$-vector space, 
and let $T(V)$ denote its graded tensor algebra over $\F_2$.  
For $R \subseteq V \otimes V$, 
let $(R)$ denote the two-sided ideal in $T(V)$ 
generated by $R$. 
We recall from \cite{ppqa} that a graded algebra 
of the form $T(V)/(R)$ is called a {\em quadratic algebra}. 
%
%
For any quadratic algebra we can define the following
chain complex of
free graded $A$-bimodules.

\begin{definition}\label{koszulcomplex}
Let $A = T(V)/(R)$ be a quadratic algebra. 
For $n\geq 0$ and $1 \leq i \leq n-1$, let
\begin{align}\label{eq:X_is}
X_i^n \coloneqq 
V^{\otimes i-1} \otimes R \otimes V^{\otimes n-i-1} \subseteq V^{\otimes n}     
\end{align}
and 
\[
K'_n \coloneqq \bigcap_{i=1}^{n-1} X_i^n\subseteq V^{\otimes n}.
\]
Here we interpret the empty intersection as the whole space,
i.e., $K'_0 = \F_2$ and $K'_1 = V$.
The Koszul complex $K(A^e, A)$ of $A$ is defined
as the non-negative chain complex of graded $A$-bimodules with
\[
K_n(A^e, A) = A \otimes K'_n \otimes A,
\]
and differential $d_n$ induced by the one in the bar resolution $B(A)$, 
i.e., 
\[
d_n \colon a \otimes v_1 \otimes \cdots \otimes v_n \otimes b
\mapsto
av_1 \otimes v_2 \otimes \cdots \otimes v_n \otimes b
+ a \otimes v_1 \otimes \cdots \otimes v_n b.
\]
\end{definition}

Note that for the differential $d_n$ in the Kozul complex, 
the middle terms of the bar construction differential, 
see Equation \eqref{eq:diff_bar_complex}, vanish. 
This is because each product \(v_i v_{i+1}\)
in a middle term has its factors \(v_i\), \(v_{i+1}\) in the space of relations $R$, 
so the product \(v_i v_{i+1}\) vanishes in $A$, 
which is where the product in the expression 
of the differential is taking place. 


\begin{definition}
A quadratic algebra $A$ is called {\em Koszul}
if its Koszul complex $K(A^e, A)$ is
a resolution of $A$ as a graded $A$-bimodule,
i.e., if $H_n(K(A^e, A)) = 0$ for $n > 0$
and $H_0(A^e, A) = A$.
\end{definition}


We will now show that $A = \F_2 [a, b, c]/(ab, bc)$ is Koszul. 
Consider the $\F_2$-vector spaces 
$V \coloneqq \Span_{\F_2}\{a, b,c\}$ 
and 
\[
R \coloneqq \Span_{\F_2} 
\{a\otimes b, b\otimes a, c \otimes b, b \otimes c, a\otimes c + c \otimes a\}
\subseteq V \otimes V,
\]
chosen such that we can identify $A$ with
$T(V)/(R)$. 
In particular, we see that the algebra 
$A$ is quadratic. 


Let $e \coloneqq a \otimes c + c \otimes a$. 
To find a convenient linear basis for $V^{\otimes n}$, 
we introduce the set 
$\mathcal{B}^n$ consisting of strings 
$x = x_1 \cdots x_k$
of symbols from the set $\{ a, b, c, e\}$ such that
$|x_1| + \cdots + |x_k| = n$ and
$ca$ does not occur as a substring of $x$. Here
$|x_i|$ denotes the degree of the symbol $x_i$,
so $|a|=|b|=|c|=1$ and $|e|=2$. We identify
the strings in $\mathcal{B}^n$ with tensors in $V^{\otimes n}$.
As an example we see that
\[\mathcal B^2 = \{a \otimes a, a \otimes b, a \otimes c,
    b \otimes a, b \otimes b, b \otimes c, c \otimes b,
    c \otimes c, e\}.\]

\begin{lemma}\label{lemma:basis_of_Vn}
For each $n$, the set $\Bh^n$ is a basis
for $V^{\otimes n}$. 
\end{lemma}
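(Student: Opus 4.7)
The plan is to identify $\mathcal{B}^n$ with the standard monomial basis $\{a,b,c\}^n$ of $V^{\otimes n}$ via a bijective leading-term map, and then deduce the basis property by a standard triangular argument. I would fix the lexicographic order on $\{a,b,c\}^n$ induced by $a < b < c$, and for each $x \in \mathcal{B}^n$ define $\mathrm{LT}(x)$ to be the lex-largest standard monomial appearing in the expansion of $x$ obtained by replacing every symbol $e$ with $a \otimes c + c \otimes a$. Since $c \otimes a > a \otimes c$ in this order, and since the choices at different occurrences of $e$ affect disjoint ranges of positions in the expanded string, it will follow that $\mathrm{LT}(x)$ is exactly the monomial obtained from $x$ by replacing every $e$ with the two-letter string $c \otimes a$.

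The key step is to verify that $\mathrm{LT} \colon \mathcal{B}^n \to \{a,b,c\}^n$ is a bijection. For this I would exhibit the inverse explicitly: given $w \in \{a,b,c\}^n$, replace every (necessarily non-overlapping) occurrence of the substring $ca$ in $w$ by the symbol $e$ to obtain a string $x$. A boundary analysis will confirm that $x$ contains no literal $ca$ substring, so $x \in \mathcal{B}^n$: every letter $c$ surviving in $x$ was followed in $w$ by $b$, $c$, or the start of a $ca$ (hence in $x$ by $b$, $c$, or $e$), and every letter $a$ surviving in $x$ was preceded in $w$ by $a$, $b$, or the end of a $ca$ (hence in $x$ by $a$, $b$, or $e$). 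A dual case analysis will show that the $ca$-substrings in $\mathrm{LT}(x)$ occur exactly at the positions of the $e$'s in $x$, so the two procedures are mutually inverse.

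Once $\mathrm{LT}$ is known to be a bijection, we get $|\mathcal{B}^n| = 3^n = \dim_{\F_2} V^{\otimes n}$ for free, and linear independence follows by the standard triangular argument: in any nontrivial $\F_2$-relation $\sum_{x} \alpha_x x = 0$ among the elements of $\mathcal{B}^n$, the lex-largest $\mathrm{LT}(x)$ over those $x$ with $\alpha_x = 1$ would appear with coefficient $1$ in the expansion of the sum, since by maximality of this monomial no other $x' \ne x$ with $\alpha_{x'} = 1$ can have it in its own expansion. This contradicts the vanishing of the sum, so all $\alpha_x = 0$, and together with the cardinality count this gives that $\mathcal{B}^n$ is a basis. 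The main technical obstacle will be the boundary case analysis in the bijection step, in particular handling the adjacencies $ce$, $ea$, and $ee$ within elements of $\mathcal{B}^n$ to rule out any spurious $ca$-substrings in $\mathrm{LT}(x)$ that are not accounted for by an $e$ of $x$.
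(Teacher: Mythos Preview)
Your proposal is correct, and the boundary analysis you flag as the main obstacle does go through cleanly: since $e$ expands to the two-letter block $ca$, the only way a $ca$-substring can appear in $\mathrm{LT}(x)$ is either from a single $e$ (the $c$ is the first letter of some $e$-block, forcing the next letter to be the $a$ of that same block) or from two literal letters of $x$, which is forbidden by the definition of $\mathcal{B}^n$. The adjacencies $ce$, $ea$, $ee$ give $c\,c\,a$, $c\,a\,a$, $c\,a\,c\,a$ after expansion, none of which introduces a spurious $ca$ straddling two symbols of $x$. The inverse direction is equally clean because distinct $ca$-substrings of any $w \in \{a,b,c\}^n$ are automatically non-overlapping.

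Your route differs from the paper's. The paper argues by building a chain of intermediate sets $\mathcal{B}^{n,0} \subset \cdots \subset \mathcal{B}^{n,\lfloor n/2\rfloor} = \mathcal{B}^n$, where $\mathcal{B}^{n,0}$ is the standard monomial basis and $\mathcal{B}^{n,i}$ allows at most $i$ symbols $e$ (with a positional restriction on where residual $ca$'s may sit); it then checks that each $\mathcal{B}^{n,i+1}$ is obtained from $\mathcal{B}^{n,i}$ by elementary column operations---replacing the first surviving $ca$ in each string by $e$---so every $\mathcal{B}^{n,i}$ is a basis. Your argument compresses this induction into a single triangularity statement via the leading-term bijection $\mathrm{LT}$, which is conceptually tidier and avoids defining and tracking the intermediate sets. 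The paper's version, on the other hand, makes explicit the elementary-operation structure that it immediately reuses to show the subsets $\mathcal{B}^n_i$ are bases for the subspaces $X^n_i$; if you were to continue to the Koszulness proof you would want to observe that your $\mathrm{LT}$ also restricts to bijections $\mathcal{B}^n_i \to X^n_i \cap \{a,b,c\}^n$, which follows by the same boundary analysis.
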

\begin{proof}
From the basis
$\{a, b, c\}$ of $V$ we obtain a standard
basis of $V^{\otimes n}$ consisting
of the pure tensors in the symbols
$\{a, b, c\}$. 
We will show that we can obtain $\mathcal B^n$
from this standard basis using only elementary
column operations, from which it follows that $\mathcal B^n$
is also a basis. 
Starting from the standard basis,
we can replace each occurrence of $c \otimes a$
with $e$ by adding a suitable linear combination
of the standard pure tensors. 
More formally,
we do this using induction.

For $i \geq 0$, let
$\mathcal B^{n, i}$ be the set of strings $x_1 \cdots x_k$
in the symbols $\{a, b, c, e\}$ satisfying the conditions:
\begin{itemize}
\item $|x_1| + \cdots + |x_k| = n$,
\item there are at most $i$ occurrences of $e$,
\item the substring $ca$ only occurs after all the $e$'s,
\item if there are less than $i$ occurrences of $e$, 
there are no occurrences of $ca$.
\end{itemize}
We see that $\mathcal{B}^{n,0}$ is the standard
basis, while $\mathcal{B}^{n, i} = \mathcal B^n$
for large enough $i$ (e.g., for $i \geq n/2$). 
We will show that each $\mathcal{B}^{n,i+1}$
can be obtained from $\mathcal{B}^{n,i}$
using only elementary column operations, where
we have identified the strings with tensors in $V^{\ot n}$.   
We obtain $\mathcal B^{n, i+1}$ from
$\mathcal B^{n,i}$ in the following manner.
If $x \in \mathcal B^{n,i}$ has no
occurrences of $ca$, then
we already have $x \in \mathcal{B}^{n,i+1}$,
so we do nothing. 
Otherwise, there are precisely $i$ occurrences of $e$ and
at least one occurrence of $ca$ in $x$.
Consider the string $x'$ obtained by
replacing the first occurrence of $ca$
with $ac$. 
We see that $x'\in \mathcal B^{n, i}$
and we add $x'$ to $x$
to obtain the tensor $x' + x \in \mathcal{B}^{n, i+1}$.
All tensors in $\mathcal{B}^{n, i+1}$ can be obtained
in precisely one of these two ways, and thus
we obtain $\mathcal{B}^{n, i+1}$ from $\mathcal{B}^{n,i}$
as wanted.
\end{proof}

We now define certain subsets of $\mathcal{B}^n$ which we will show are bases for the spaces $X_i^n$ 
from Equation \eqref{eq:X_is}.
For
$1 \leq i \leq n-1$, let $\mathcal B_i^n$  consist of the strings in $\mathcal B^n$
where
the $(i, i+1)$-part is in $R$, where we count
$e$ with multiplicity $2$. More precisely,
for a string $x \in \mathcal B^n$ and 
an integer $i$, $1 \leq i \leq n$, we 
obtain a symbol $x_{(i)}$ by the following process. We first
modify the string $x$ into a string $x'$ of length $n$
by doubling every occurrence of $e$ in $x$, and
we then set $x_{(i)} = x'_i$. We can now define 
\[ \mathcal B_i^n \coloneqq
    \{x \in \mathcal{B}^n \mid
    x_{(i)}x_{(i+1)} \in 
    \{ab, ba, cb, bc,
     eb, be, ee\}\}.
\]
For instance, for the string $x = bebe$ we get that
$x' = beebee$ such that, e.g., 
$x_{(4)} = b$, and we see that
$x \in \mathcal{B}_i^6$ for all $1 \leq i \leq 5$. 

Now we can prove the main result of this section. 

\begin{proposition}\label{prop:A_is_Koszul}
The quadratic algebra $A = \F_2 [a, b, c]/(ab, bc)$
is Koszul.    
\end{proposition}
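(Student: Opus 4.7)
The plan is to use the combinatorics of Lemma~\ref{lemma:basis_of_Vn} to pin down the modules $K'_n$ explicitly, and then deduce acyclicity of the Koszul complex.

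The first step is to strengthen Lemma~\ref{lemma:basis_of_Vn} to the statement that, for each $1 \leq i \leq n-1$, the set $\mathcal{B}_i^n$ is an $\F_2$-basis for $X_i^n \subseteq V^{\otimes n}$. The inclusion $\Span(\mathcal{B}_i^n) \subseteq X_i^n$ is a short case analysis on $x_{(i)}x_{(i+1)}$: for the pairs $ab, ba, cb, bc$ the relation sits literally in the $(i,i+1)$-slot, while for the pairs $ee, eb, be$ the relation $e \in R$ spans two consecutive positions containing $\{i,i+1\}$, so in each case the tensor factors through $V^{\otimes i-1} \otimes R \otimes V^{\otimes n-i-1}$. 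For the reverse inclusion, I would rerun the induction on the number of occurrences of $e$ from the proof of Lemma~\ref{lemma:basis_of_Vn}, restricted to $X_i^n$: the column operations used there preserve this subspace, so the rewriting lands inside $\Span(\mathcal{B}_i^n)$.

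Taking intersections then identifies $K'_n$ with $\Span\bigl(\bigcap_{i=1}^{n-1} \mathcal{B}_i^n\bigr)$: concretely, strings of total degree $n$ on $\{a,b,c,e\}$ (avoiding $ca$) whose consecutive pairs at the doubled-symbol level all lie in $\{ab, ba, cb, bc, eb, be, ee\}$. A short finite-state analysis shows that these admissible strings are built by alternating a small family of tiles, so $K'_n$ is explicitly enumerable in each internal degree.

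Finally, I would prove acyclicity of the augmented Koszul complex by mimicking the construction from Proposition~\ref{bar-acyclic} and building a contracting homotopy that prepends a carefully chosen leading symbol so that the output remains admissible; the combinatorial description of $K'_n$ dictates which symbol may be prepended at each step. A likely cleaner alternative is to observe that, with respect to the length-lex order $a < c < b$, the leading monomials $\{ab, ba, cb, bc, ca\}$ of the defining relations form a quadratic Gr\"obner basis---a finite critical-pair check confirms this---and then to invoke Priddy's PBW criterion, asserting that any quadratic algebra with a quadratic Gr\"obner basis is Koszul. The main obstacle is precisely this last step: either route requires care, whether bookkeeping that the prepended symbol keeps one inside the Koszul subcomplex, or verifying that every critical overlap reduces to zero. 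The combinatorial scaffolding developed in this section is designed to make these computations tractable.
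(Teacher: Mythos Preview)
Your first step---showing that each $\mathcal{B}_i^n$ is an $\F_2$-basis of $X_i^n$---is exactly the paper's argument, and the paper stops there: once a single basis $\mathcal{B}^n$ of $V^{\otimes n}$ restricts to a basis of every $X_i^n$, the lattice generated by $X_1^n,\ldots,X_{n-1}^n$ is distributive, and the paper invokes \cite[Chapter~2, Theorem~4.1]{ppqa} to conclude Koszulness immediately. So after your first paragraph you have already finished; the contracting-homotopy step you propose next is unnecessary and, as you anticipate, would be delicate to execute on $K(A^e,A)$.

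Your Gr\"obner-basis alternative is also correct and constitutes a genuinely different route: with $a<c<b$ the leading terms $\{ab,ba,cb,bc,ca\}$ of the five relations form a quadratic Gr\"obner basis---the only overlaps involving the non-monomial relation are $cab$ and $bca$, and both reduce to zero---so Priddy's PBW criterion applies. This bypasses the $\mathcal{B}^n$ machinery entirely and is arguably the quickest proof. What the paper's distributing-basis approach buys, by contrast, is the explicit basis $\mathcal{B}'_n=\bigcap_i \mathcal{B}_i^n$ of $K'_n$ as an immediate byproduct, which is precisely what is used for the Hochschild computation in Section~\ref{sec:main_result_proof}.
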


\begin{proof}
We will show that, for each $n \geq 0$, 
the basis $\mathcal B^n$ of $V^{\otimes n}$ distributes over 
the subspaces $X_1^n, \ldots, X_{n-1}^n$, 
i.e., 
for each $X^n_i$ the 
subset $\Bh^n_i \subseteq \Bh^n$ 
forms a basis for $X^n_i$.  
By the work of Backelin in \cite{Backelin}, 
see also \cite[Chapter 2, Theorem 4.1]{ppqa}, 
this implies that $A$ is Koszul.  
By definition, 
${X_i^n = V^{\otimes i-1} \otimes R \otimes V^{n-i-1}}$,
hence from the standard basis of $V$ and
the basis $\mathcal R = \{a \otimes b, b\otimes a, 
c \otimes b, b \otimes c, e\}$
of $R$, we obtain a basis of $X_i^n$. This basis
consists of strings $x$ in the symbols
$\{a, b, c, e\}$ with $e$ only possibly occurring as the $i$-th symbol and with
$x_{(i)}x_{(i+1)} \in \mathcal R$,
where $x_{(i)}$ is the notation introduced above to
define $\mathcal B_i^n$.
Using a similar induction argument as in the proof of Lemma \ref{lemma:basis_of_Vn}, we
replace each occurrence of $ca$ with $e$ in
this basis using
only elementary column operations to obtain a new
basis for $X_i^n$. This
gives precisely the set 
$\mathcal B_i^n \subseteq \mathcal B^n$,
which shows that $\mathcal B^n$ distributes
$X_1^n, \ldots, X_{n-1}^n$. 
\end{proof}

\begin{remark}
As a consequence of the proof of Proposition \ref{prop:A_is_Koszul} 
we see that
$\Bh'_n \coloneqq \bigcap_{i=1}^{n-1} \mathcal B_i^n$
is a basis
for
$K'_n = \bigcap_{i=1}^{n-1} X_i^n$.  
This basis $\mathcal B'_n$ can be explicitly
described as the set of strings $x_1 \cdots x_k$
in the symbols
$\{a, b, c, e\}$ satisfying
that
$|x_1|+ \cdots + |x_k|= n$
and that the symbols in the string alternate
between $b$ and a symbol
from the set $\{a, c, e\}$.
For example, we get 
\begin{align}
\label{eq:B'3}
\Bh_3' = \{
    aba, abc,
    cba, cbc,
    bab, bcb,
    eb, be\}
\end{align}
and 
\begin{align}
\label{eq:B'4}
\Bh_4' = \{
    abab, abcb, bcba, bcbc, 
    baba, babc, cbab, cbcb, 
    abe, eba, cbe, ebc, beb\}.
\end{align}
\end{remark}


\section{Proof of the main result}\label{sec:main_result_proof}

By Proposition \ref{prop:A3formal_Massey} and Corollary \ref{cor:HH_and_A3_formal},   
Theorem \ref{thm:main_intro} will follow from the following: 

\begin{theorem}\label{thm:computationof_HH}
We have $\HH^{3,-1}(\F_2[a, b, c]/(ab, bc))=0$.
\end{theorem}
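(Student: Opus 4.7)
Since $A = \F_2[a,b,c]/(ab,bc)$ is Koszul by Proposition~\ref{prop:A_is_Koszul}, Remark~\ref{rem:HH_and_Ext} lets us replace the bar resolution by the much smaller Koszul resolution $K(A^e, A)$ when computing $\HH^{*,*}(A)$. Using the adjunction $\gHom_{A^e}(A \ot K'_n \ot A, A) \cong \gHom_{\F_2}(K'_n, A)$ together with the fact that every string in $\Bh'_n$ has internal degree $n$, the internal-degree $-1$ part in cohomological degree $n$ identifies with $\Hom_{\F_2}(K'_n, A^{n-1})$. So computing $\HH^{3,-1}(A)$ reduces to finding the middle cohomology of
\[
\Hom_{\F_2}(K'_2, A^1) \xto{\partial_2} \Hom_{\F_2}(K'_3, A^2) \xto{\partial_3} \Hom_{\F_2}(K'_4, A^3).
\]
The relevant dimensions are $\dim K'_2 = \dim R = 5$, $\dim K'_3 = 8$, and $\dim K'_4 = 13$ from the explicit bases \eqref{eq:B'3} and \eqref{eq:B'4}, while $\dim A^n = n+2$ for $n \ge 1$, since the relations $ab = bc = 0$ force any nonzero monomial in $A$ to be either a pure power $b^n$ or a product of $a$'s and $c$'s alone.

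By Definition~\ref{koszulcomplex}, the Koszul differential consists only of the two extreme terms, so the induced cochain differential is
\[
(\partial f)(v_1 \ot \cdots \ot v_{n+1}) = v_1 \, f(v_2 \ot \cdots \ot v_{n+1}) + f(v_1 \ot \cdots \ot v_n) \, v_{n+1},
\]
well-defined on $K'_{n+1}$ because $K'_{n+1} \subseteq V \ot K'_n \cap K'_n \ot V$. The plan is then a direct calculation: for each of the $8$ basis elements of $\Bh'_3$, decompose it both inside $V \ot R$ and inside $R \ot V$ to read off $(\partial_2 f)$ as an expression in the $15$ scalar parameters describing $f \in \Hom(R, A^1)$, and similarly evaluate $\partial_3$ on the $13$ basis elements of $\Bh'_4$. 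The multiplication in $A$ is very sparse---any monomial in which $b$ is adjacent to $a$ or to $c$ vanishes---so the two matrices have many zero entries.

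The final step is to verify $\dim \Ker \partial_3 = \dim \Imm \partial_2$ inside the $32$-dimensional space $\Hom(K'_3, A^2)$. To reduce bookkeeping, one can exploit the $\F_2$-algebra involution $\sigma$ swapping $a$ and $c$ (which preserves both relations and hence acts compatibly on the whole Koszul complex), decomposing each $\Hom(K'_n, A^{n-1})$ into $\sigma$-fixed and non-fixed summands and roughly halving the work. The main obstacle I anticipate is not conceptual but purely the organized linear-algebra bookkeeping across these two sparse matrices; the Koszul reduction has already absorbed the serious content. Once $\HH^{3,-1}(A) = 0$ is established, Corollary~\ref{cor:HH_and_A3_formal} together with Proposition~\ref{prop:A3formal_Massey} immediately yields Theorem~\ref{thm:main_intro}.
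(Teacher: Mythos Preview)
Your reduction via the Koszul resolution to checking exactness of
\[
\Hom_{\F_2}(K'_2, A^1) \xto{\partial^2} \Hom_{\F_2}(K'_3, A^2) \xto{\partial^3} \Hom_{\F_2}(K'_4, A^3)
\]
is exactly the paper's approach, your dimension counts ($\dim K'_n = 5,8,13$ and $\dim A^n = n+2$) are correct, and the two-term differential you wrote down is the right one. The $a\leftrightarrow c$ involution you propose is a genuine symmetry of the whole complex; the paper exploits it only informally, via the $(a|c)$ shorthand, rather than as an eigenspace splitting.

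The gap is that you stop at ``verify $\dim\Ker\partial_3 = \dim\Imm\partial_2$'' without performing the verification. In a $32$-dimensional middle term this verification \emph{is} the theorem: the Koszul reduction is standard, and the content lies precisely in the bookkeeping you defer. The paper completes it in two steps. First it evaluates $\partial^3 f$ on each of the thirteen elements of $\Bh'_4$ to extract seven relations (Lemma~\ref{kernel-rels}) cutting out $\Ker\partial^3$. Then it exhibits ten explicit elements of the form $\partial^2(F_2(v;x))$, checks their linear independence by observing that each contains a distinguished basis term appearing in no other, and shows that any $f$ satisfying the seven relations can be reduced to zero by subtracting a combination of these ten (Lemma~\ref{lemma:basis_of_Kerd3}). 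This gives $\dim\Ker\partial^3 = 10$ with a basis already inside $\Imm\partial^2$, hence exactness. You still owe exactly this computation, or an equivalent rank calculation carried out in full.
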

\begin{proof}
Since $A$ is Koszul, 
the natural inclusion $K(A^e, A) \into B(A)$ 
is a quasi-isomorphism. 
Hence we can compute
$\HH(A)$ as the cohomology of
the complex $\gHom_{A^e}(K(A^e, A), A)$.
We first observe that we have
\[
K(A^e, A)_n = A \otimes K'_n \otimes A \cong A^e \otimes K'_n,
\]
as graded $A$-bimodules. 
Using the contracting isomorphism
\[
\gHom_{A^e} (A^e \otimes K'_n, A) \cong \gHom_{\F_2}(K'_n, A)
\]
of graded vector spaces 
we see that $\HH(A)$ can be computed as the cohomology
of the following complex of graded vector spaces:
\[
\cdots \to \gHom_{\F_2}(K'_{n-1}, A) 
\xrightarrow{\partial^{n-1}} \gHom_{\F_2}(K'_n, A)
\xrightarrow{\partial^{n}} \gHom_{\F_2}(K'_{n+1}, A)
\to \cdots 
\]
where the differential $\partial^n$
is given by
\[
\partial^n(f)(v_1 \otimes \cdots \otimes v_{n+1}) 
    = v_1f(v_2 \otimes \cdots \otimes v_{n+1}) + 
    f(v_1 \otimes \cdots \otimes v_n)v_{n+1}.
\]
To compute $\HH^{3,-1}(A)=0$,
we need to show that  
\[
\Hom_{\F_2}(K'_2, A^1) 
    \xrightarrow{\partial^2} \Hom_{\F_2}(K'_3, A^2)
    \xrightarrow{\partial^3} \Hom_{\F_2}(K'_4, A^3)
\]
is exact in the middle, 
i.e., $\Imm \partial^2 = \Ker \partial^3$. 


First we will describe $\Ker \partial^3$. 
To do so, we use the following notation for elements 
in the bases $\mathcal B'_3$ and $\mathcal B'_4$ of $K'_3$ and $K'_4$, respectively. 
Since
$a$ and $c$ play symmetrical roles in $A$,
we will introduce the notation $(a|c)$ to mean
that each of $a$ and $c$ can be used in the expression.
For example, for a map $f \in \Hom_{\F_2}(K'_3, A^2)$, 
the equation $f((a|c) \otimes b) = 0$ would
mean that we have two equations $f(a \otimes b) = 0$ 
and $f(c \otimes b) = 0$. If there are several instances
of $(a|c)$ in the expression, each instance
can be replaced by $a$ or $c$ independently of each other.

\begin{lemma}\label{kernel-rels}
A map $f \in \Hom_{\F_2}(K'_3, A^2)$
lies in $\Ker \partial^3$ if and only if it
satisfies the following relations:
\begin{align}
f(b \otimes (a|c) \otimes b) \in \Span_{\F_2}\{ b^2 \}, \label{i} \tag{i}\\
f((a|c) \otimes b \otimes (a|c)) \in \Span_{\F_2}\{ a^2, ac, c^2\}, \label{ii} \tag{ii} \\
a(f(c \otimes b \otimes a) + f(e \otimes b)) 
    + cf(a \otimes b \otimes a) = 0, \label{iii} \tag{iii} \\
c(f(a \otimes b \otimes c) + f(e \otimes b))
    + af(c \otimes b \otimes c) = 0, \label{iv} \tag{iv} \\
a(f(a \otimes b \otimes c) + f(b \otimes e))
    + cf(a \otimes b \otimes a) = 0, \label{v} \tag{v}\\
c(f(c \otimes b \otimes a) + f(b \otimes e))
    + af(c \otimes b \otimes c) = 0, \label{vi} \tag{vi} \\
f(e \otimes b) + f(b\otimes e) \in \Span_{\F_2}\{ a^2, ac, c^2 \}. \label{vii} \tag{vii}
\end{align}
\end{lemma}
\begin{proof}
A map $f \in \Hom_{\F_2}(K'_3, A^2)$ is in the kernel of $\partial^3$ 
if and only if $\partial^3(f)$ vanishes on all 
elements of the basis $\Bh'_4$. 
We now evaluate $\partial^3(f)$ on $\Bh'_4$ as described in \eqref{eq:B'4}. 
First, 
since $(a|c)b = 0$ in $A^2$, 
the equation 
\[
\partial^3(f)((a|c)\ot b \ot (a|c) \ot b) 
= (a|c) f(b \ot (a|c) \ot b) + f((a|c) \ot b \ot (a|c)) b
= 0
\]
implies 
$f(b \otimes (a|c) \otimes b) \in \Span_{\F_2}\{b^2\}$, 
and
$f((a|c) \otimes b \otimes (a|c)) \in \Span_{\F_2}\{a^2, ac, c^2\}$.  
The equation $\partial^3(f)(b \otimes (a|c)\otimes b \otimes (a|c))=0$
gives the same relations. 
This shows that \eqref{i} and \eqref{ii} are necessary and sufficient. 
Second, 
\eqref{iii} and \eqref{iv} are imposed by 
the equations  
\[
\partial^3(f)(e \ot b \ot a) 
= af(c \ot b \ot a) + cf(a \ot b \ot a) + f(e \ot b)a 
= 0
\]
and  
\[
\partial^3(f)(e \ot b \ot c) 
= af(c \ot b \ot c) + cf(a \ot b \ot c) + f(e \ot b)c
= 0.
\]
Similarly, 
\eqref{v} and \eqref{vi} are imposed by 
the equations  
\[
\partial^3(f)(a \ot b \ot e)
= a f(b \ot e) + f(a \ot b \ot a)c + f(a \ot b \ot c)a 
= 0
\]
and 
\[
\partial^3(f)(c \ot b \ot e)
= c f(b \ot e) + f(c \ot b \ot a)c + f(c \ot b \ot c)a 
= 0. 
\]
Finally, 
the condition
\[
\partial^3(f)(b \ot e \ot b) 
= bf(e \ot b) + f(b \ot e)b
= 0 
\]
gives the relation 
$f(e \ot b) + f(b \ot e) \in \Span_{\F_2}\{ a^2, ac, c^2 \}$ 
which is \eqref{vii}. 
\end{proof}

\begin{notn}
For $v \in \mathcal B'_n$ and $x \in A^i$, 
we write
$F_n(v;x)$ for
the map in $\Hom_{\F_2}(K'_n, A^i)$ sending
$v$ to $x$ and other basis vectors in $\mathcal B'_n$ 
to zero. 
\end{notn}

\begin{lemma}\label{lemma:basis_of_Kerd3}
The set of maps 
\begin{align*}
\Sb_3 \coloneqq 
\Big\{
\partial^2(F_2(b \ot a;b)) & = F_3(b \ot a \ot b; b^2), \\
\partial^2(F_2(b \ot c;b)) & = F_3(b \ot c \ot b; b^2), \\
\partial^2(F_2(e; b)) & = F_3(b \ot e; b^2) + F_3(e \ot b; b^2), \\
\partial^2(F_2(c\ot b; a)) & = F_3(e \ot b; a^2) + F_3(c \ot b \ot a; a^2) +  F_3(c \ot b \ot c; ac), \\
\partial^2(F_2(b \ot c; a)) & = F_3(b \ot e; a^2) + F_3(a \ot b \ot c; a^2) +  F_3(c \ot b \ot c; ac), \\
\partial^2(F_2(a \ot b; a)) & = F_3(e \ot b; ac) + F_3(a \ot b \ot c; ac) +  F_3(a \ot b \ot a; a^2), \\
\partial^2(F_2(a \ot b; c)) & = F_3(e \ot b; c^2) + F_3(a \ot b \ot c; c^2) +  F_3(a \ot b \ot a; ac), \\
\partial^2(F_2(b \ot a; c)) & = F_3(b \ot e; c^2) + F_3(c \ot b \ot a; c^2) +  F_3(a \ot b \ot a; ac), \\
\partial^2(F_2(c \ot b; c)) & = F_3(e \ot b; ac) + F_3(c \ot b \ot a; ac) +  F_3(c \ot b \ot c; c^2), \\
\partial^2(F_2(b \ot c; c)) & = F_3(b \ot e; ac) + F_3(a \ot b \ot c; ac) +  F_3(c \ot b \ot c; c^2)
\Big\}
\end{align*}
is a basis of $\Ker \partial^3$. 
\end{lemma}
\begin{proof}
We consider the set $\Sb'_3$ of $\F_2$-linear maps $K'_3 \to A^2$  defined by 
\begin{align*}
\Sb'_3 \coloneqq 
\Big\{ 
F_3(b \ot a \ot b; b^2), 
F_3(b \ot c \ot b; b^2),  
F_3(b \ot e; b^2),  
F_3(e \ot b; a^2), 
F_3(b \ot e; a^2), \\
F_3(a \ot b \ot a; a^2), 
F_3(e \ot b; c^2), 
F_3(b \ot e; c^2), 
F_3(c \ot b \ot a; ac), 
F_3(b \ot e; ac)
\Big\}. 
\end{align*}
We observe that each element of $\Sb'_3$ occurs exactly once as a term in one of the maps in $\Sb_3$. 
Since 
$\Ah_2 \coloneqq \{a^2, b^2, c^2, ac\} \subseteq A^2$ 
is a basis of $A^2$, it follows that 
the set $\Sb_3$ is linearly independent.  
It remains to show that $\Sb_3$ generates $\Ker \partial^3$.  
Let $f \in \Ker \partial^3$ be an arbitrary element.   
For $v \in \Bh'_3$ and 
$x \in \Ah_2$, 
let $\varphi(v;x) \in \F_2$
be the coefficient 
such that, for all $v \in \Bh'_3$, 
\[
f(v) = \sum_{x \in \Ah_2} \varphi(v;x)x. 
\]
Again, since each element of $\Sb'_3$ occurs exactly once as a term in one of the maps in $\Sb_3$, 
we can assume 
by adding a linear combination of the elements of $\Sb_3$ to $f$  
that the coefficients 
\begin{align*}
\varphi(b \ot a \ot b; b^2), 
\varphi(b \ot c \ot b; b^2),  
\varphi(b \ot e; b^2),  
\varphi(e \ot b; a^2), 
\varphi(b \ot e; a^2), \\
\varphi(a \ot b \ot a; a^2), 
\varphi(e \ot b; c^2), 
\varphi(b \ot e; c^2), 
\varphi(c \ot b \ot a; ac), 
\varphi(b \ot e; ac)
\end{align*}
are all zero. 
Now it suffices to show that then $f$ must be the zero map. 
Since $f \in \Ker \partial^3$, 
$f$ must  satisfy
the relations in Lemma~\ref{kernel-rels}.
We see that \eqref{i} implies
that $\varphi(b \ot (a|c) \ot b;x) = 0$ 
for $x \neq b^2$. 
Hence we have 
$f(b \otimes (a|c) \otimes b) = 0$.  
We also see that \eqref{vii} implies that
$\varphi(b\otimes e; b^2) = \varphi(e \otimes b; b^2) = 0$.
We therefore have
$f(b\otimes e)=0$.  
Equation \eqref{v} implies that 
$\varphi(a\otimes b \otimes a; c^2) =0$, 
and \eqref{ii} implies that
$\varphi(a\otimes b \ot a; b^2) =0$. 
This shows that
either $f(a \otimes b \otimes a) = ac$ or
$f(a\otimes b \otimes a) = 0$.  
If
$f(a \otimes b \otimes a) = ac$, then
\eqref{v} implies that
$f(a\otimes b \otimes c) = c^2$. 
Now \eqref{iv} forces $f(e \otimes b) = c^2$, 
which contradicts $\varphi(e \ot b; c^2) = 0$. 
We thus have
$f(a \otimes b \otimes a) = 0$.  
From \eqref{v} and \eqref{ii}, we then get
$f(a \otimes b \otimes c) = 0$.  
From \eqref{iii} we deduce that
$\varphi(e \ot b;ac) = 
    \varphi(c \ot b \ot a; ac) = 0$,
and hence  
$f(e \otimes b) = 0$.  
It now follows from \eqref{iii} and \eqref{ii}
that
$f(c \otimes b \otimes a) = 0$.  
Finally, from \eqref{vi} and \eqref{ii} we get 
$f(c \otimes b \otimes c) = 0$.  
This shows that $f$ vanishes on all elements of the basis $\Bh'_3$, 
and hence $f$ is the zero map.  
\end{proof}
Since the elements in the set $\Sb_3$ belong to the subset $\Imm \partial^2$ of $\Ker \partial^3$, 
Lemma \ref{lemma:basis_of_Kerd3} implies $\Imm \partial^2 = \Ker \partial^3$. 
This finishes the proof of Theorem \ref{thm:computationof_HH}. 
\end{proof}



\begin{thebibliography}{99}

\bibitem{Backelin} J.\,Backelin, {\it A distributiveness property of augmented algebras and some related homological results}, 
Ph.\,D.\,Thesis, Stockholm, 1981. 

\bibitem{BKS} D.\,Benson, H.\,Krause and S.\,Schwede, {\it Realizability of modules over Tate cohomology}, Trans. Amer. Math. Soc. \textbf{356} (2004), no. 9, 3621--3668. 

\bibitem{BMM} U.\,Buijs, J.\,M.\,Moreno-Fern\'andez and A.\,Murillo, {\it $A_{\infty}$-structures and Massey products}, Mediterr. J. Math. \textbf{17}, 
(2020), no. 1, Paper No. 31, 15 pp.

\bibitem{HW} M.\,Hopkins and K.\,Wickelgren, {\it Splitting varieties for triple Massey products}, J. Pure Appl. Algebra \textbf{219} (2015), 1304--1319. 


\bibitem{Kadeishvili82} T.\,V.\,Kadeishvili, {\it The algebraic structure in the homology of an $A_{\infty}$-algebra} (Russian. English summary) Soobshch. Akad. Nauk Gruzin. SSR \textbf{108} (1982), no.\,2, 249--252 (1983).

\bibitem{Kadeishvili88} T.\,V.\,Kadeishvili, {\it The structure of the $A_{\infty}$-algebra, and the Hochschild and Harrison cohomologies} (Russian), Trudy Tbiliss. Mat. Inst. Razmadze Akad. Nauk Gruzin. SSR \textbf{91} (1988), 19--27 (an English version is available at arXiv:math/0210331). 

\bibitem{Kadeishvili23} T.\,V.\,Kadeishvili, {\it $A_{\infty}$-algebra Structure in Cohomology and its Applications}, Lecture notes 2023, available at \url{https://doi.org/10.48550/arXiv.2307.10300}. 

\bibitem{Keller} B.\,Keller, {\it Introduction to $A$-infinity algebras and modules},  Homology Homotopy Appl. \textbf{3} (2001), 1--35. 

\bibitem{Merkulov} S.\,A.\,Merkulov, {\it Strong homotopy algebras of a K\"ahler manifold}, Internat. Math. Res. Notices (3) (1999), 153--164. 

\bibitem{MS2} A.\,Merkurjev and F.\,Scavia, {\it The Massey Vanishing Conjecture for fourfold Massey products modulo 2}, 
Ann. Sci. \'Ec. Norm. Sup\'er. (4) \textbf{58} (2025), no. 3, 589--606. 

\bibitem{MT2} J.\,Min\'a\v c and N.\,D.\,T\^an, {\it Triple Massey products and Galois theory}, J. Eur. Math. Soc. \textbf{19} (2017), 255--284.

\bibitem{PQ1} A.\,P\'al and G.\,Quick, {\it Real projective groups are formal}, Math.~Ann. \textbf{392} (2025), 1833--1876. 

\bibitem{PQ2} A.\,P\'al and G.\,Quick, {\it $A_3$-formality for Demushkin groups at odd primes}, preprint, arXiv:2601.07551 (2026). 

\bibitem{ppqa} A.\,Polishchuk and L.\,Positselski, {\it Quadratic algebras}, University Lecture Series, 37. American Mathematical Society, Providence, RI, 2005. xii+159 pp.

\bibitem{ST} P.\,Seidel and R.\,Thomas, {\it Braid group actions on derived categories of coherent sheaves}, Duke Math. J. \textbf{108} (2001), no.\,1, 37--108. 
 
\end{thebibliography}
\end{document}